\let\@@pmod\pmod
\DeclareRobustCommand{\pmod}{\@ifstar\@pmods\@@pmod}
\def\@pmods#1{\mkern4mu({\operator@font mod}\mkern 6mu#1)}
\newtheorem{thm}{Theorem}
\newtheorem{conj}[thm]{Conjecture}
\newtheorem{lem}[thm]{Lemma}
\newtheorem{prop}[thm]{Proposition}
\newtheorem{cor}[thm]{Corollary}
\theoremstyle{definition}
\newtheorem*{rem}{Remark}
\newtheorem*{rems}{Remarks}
\newtheorem{ex}[thm]{Example}
\newcommand{\R}{\mathbb{R}}
\newcommand{\Q}{\mathbb{Q}}
\newcommand{\Z}{\mathbb{Z}}
\newcommand{\F}{\mathbb{F}}
\renewcommand{\to}{\rightarrow}
\renewcommand{\epsilon}{\varepsilon}
\newcommand{\disc}{\operatorname{disc}}
\newcommand{\lc}{\operatorname{lc}}
\newcommand{\lcm}{\operatorname{lcm}}
\newcommand{\ord}{\operatorname{ord}}
\newcommand{\sqf}{\operatorname{sqf}}
\begin{document}

\title[Twists of hyperelliptic curves]{Twists of hyperelliptic curves by integers in progressions modulo $p$}

\author[D. Krumm]{David Krumm}
\address{Mathematics Department\\
Reed College}
\email{dkrumm@reed.edu}
\urladdr{http://maths.dk}

\author[P. Pollack]{Paul Pollack}
\address{Department of Mathematics\\
University of Georgia}
\email{pollack@uga.edu}
\urladdr{http://pollack.uga.edu}

\thanks{The second author was supported by NSF award DMS-1402268.}
\keywords{Hyperelliptic curve, quadratic twist, abc conjecture}
\subjclass[2010]{Primary: 11N32. Secondary: 11N36, 11G30}

\maketitle

\begin{abstract}
Let $f(x)$ be a nonconstant polynomial with integer coefficients and nonzero discriminant. We study the distribution modulo primes of the set of squarefree integers $d$ such that the curve $dy^2=f(x)$ has a nontrivial rational or integral point.
\end{abstract}

\section{Introduction}

Let $f(x)\in\Z[x]$ be a nonconstant polynomial with nonzero discriminant, and let $C$ be the hyperelliptic curve over $\Q$ defined by $y^2=f(x)$. For every squarefree integer $d$, let $C_d$ denote the quadratic twist $dy^2=f(x)$. The main object of interest in this article is the set $S_{\Q}(f)$ consisting of all squarefree integers $d$ such that $C_d$ has a nontrivial rational point, i.e., an affine rational point $(x_0,y_0)$ with $y_0\ne 0$. Specifically, we are interested in the following conjecture, which was proposed by the first author in \cite{krumm}.

\begin{conj}\label{krumm_conjecture}
For every large enough prime $p$, and every integer $r$ not divisible by $p$, there exist infinitely many $d\in S_{\Q}(f)$ such that $d\equiv r\pmod p$.
\end{conj}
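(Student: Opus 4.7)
My approach is to recast the problem in terms of squarefree values of a single binary form, and then to invoke the squarefree-values sieve (conditionally on the abc conjecture when the degree is large).

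Set $n = \deg f$, let $F(X,Y) = Y^n f(X/Y)$ be the homogenization of $f$, and define
\[
G(X,Y) = \begin{cases} F(X,Y) & \text{if $n$ is even,}\\ Y\,F(X,Y) & \text{if $n$ is odd,}\end{cases}
\]
so that $\deg G$ is even. The hypothesis $\disc(f) \ne 0$, together with the nonvanishing of the leading coefficient of $f$, ensures that $G$ has no repeated linear factors over $\overline{\Q}$. For coprime $(u,v) \in \Z^2$ with $v > 0$ and $G(u,v) \ne 0$, the identity $f(u/v) = G(u,v)/v^{\deg G}$ combined with the fact that $v^{\deg G}$ is a perfect square yields $\sqf(f(u/v)) = \sqf(G(u,v))$. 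Consequently $d \in S_{\Q}(f)$ if and only if $d = \sqf(G(u,v))$ for some coprime $(u,v) \in \Z^2$ with $G(u,v) \ne 0$; in particular, every coprime pair $(u,v)$ for which $G(u,v)$ is squarefree and nonzero contributes $d = G(u,v) \in S_{\Q}(f)$.

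Via this dictionary, the conjecture reduces to finding, for each large $p$ and each $r \in (\Z/p\Z)^*$, infinitely many coprime $(u,v) \in \Z^2$ with $G(u,v)$ squarefree and $G(u,v) \equiv r \pmod p$. I would proceed in two steps. First, locate $(u_0,v_0) \in \F_p^2 \setminus \{(0,0)\}$ with $G(u_0,v_0) \equiv r \pmod p$: since $p \nmid \disc(G)$ for $p$ large, $G$ has no repeated factors over $\overline{\F_p}$, and Euler's identity $X G_X + Y G_Y = (\deg G)\cdot G$ forces the affine curve $G(X,Y) = r$ in $\A^2_{\F_p}$ to be smooth. Weil-type point counts then yield $p + O(\sqrt p)$ rational points, producing such $(u_0,v_0)$ for $p$ sufficiently large.

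Second, lift to integers. Since $G(u_0,v_0) \not\equiv 0 \pmod p$, there is no local obstruction at $p$ to $G$ being squarefree in the progression $(u,v) \equiv (u_0,v_0) \pmod p$, and for primes $\ell \ne p$ the local density of pairs with $\ell^2 \mid G(u,v)$ is strictly less than $1$ (since $G$ has no repeated linear factors), so no other local obstruction appears. The conditional squarefree-values result of Granville, appropriately adapted to binary forms and to congruence classes, then produces a positive density of coprime $(u,v) \in \Z^2$ with $(u,v) \equiv (u_0,v_0) \pmod p$ and $G(u,v)$ squarefree. Each such pair contributes $d = G(u,v) \equiv r \pmod p$ in $S_{\Q}(f)$, and since $|G(u,v)| \to \infty$ away from its (finitely many) linear factors, distinct large pairs give distinct values of $d$, yielding infinitely many. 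The principal obstacle is precisely this squarefree-values step for $G$: for $\deg G \le 3$ it is unconditional (by classical methods of Hooley, Greaves and others), but for $\deg G \ge 4$ one must invoke the abc conjecture via Granville's framework to control the contribution of large primes $\ell$ with $\ell^2 \mid G(u,v)$. Thus the plan gives the conjecture unconditionally in low degree and conditionally on abc in general.
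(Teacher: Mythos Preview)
Your plan is close in spirit to the paper's proof of Theorem~\ref{unconditional_thm}: both pass to a binary form and invoke a squarefree-values sieve in a fixed residue class mod $p$, after using Weil-type bounds to locate a suitable class. There is, however, a genuine gap in your reduction step.

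You assert that for every prime $\ell\ne p$ the local density of coprime pairs with $\ell^2\mid G(u,v)$ is strictly less than $1$, ``since $G$ has no repeated linear factors,'' and hence that there is no local obstruction to $G(u,v)$ being squarefree. This fails already when the content of $f$ is not squarefree: if $f(x)=4(x^3+1)$ then $G(X,Y)=4Y(X^3+Y^3)$ has no repeated factors over $\overline\Q$, yet $4\mid G(u,v)$ for every pair, so $G(u,v)$ is \emph{never} squarefree and your map into $S_\Q(f)$ has empty domain. More subtle obstructions can occur even when the content is $1$, coming from the fixed divisor $D$ of $f$. The paper's argument confronts this head-on: rather than homogenizing $f$ directly, it first replaces $f$ by the shifted and rescaled polynomial $g(x)=f(ax+b)/\Delta$, where $a$, $b$, and $\Delta$ are built from $D$, from an auxiliary Chebotarev prime $q$, and from the data in the statement $S(r,v)$, precisely so that $g$ has no fixed prime divisor. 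Only then is $g$ homogenized and Greaves's sieve applied. Your plan would need an analogous preprocessing step; without it the squarefree-values input is simply false in general.

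A secondary point: you substantially undersell the unconditional range. You quote ``$\deg G\le 3$'' as the threshold for unconditional results, but Greaves's theorem on power-free values of binary forms (Proposition~\ref{greaves_prop} here) handles any $G$ whose irreducible factors all have degree at most six. This is exactly what the paper exploits to prove Theorem~\ref{unconditional_thm}, and it is why the unconditional result covers all $f$ with irreducible factors of degree $\le 6$, not merely $\deg f\le 3$.
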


This conjecture is proved in \cite{krumm} in the case where $\deg f\le 2$. Furthermore, when $\deg f=3$, or when $\deg f=4$ and $f(x)$ has a rational root, the conjecture is shown to follow from the Parity Conjecture for elliptic curves over $\Q$. In this paper we explain how to leverage known results on squarefree values of polynomials and binary forms to prove the following two theorems.

First, using work of Granville \cite{granville} we show that Conjecture \ref{krumm_conjecture} follows from the abc conjecture; in fact, the latter can be used to prove a stronger statement. Let us denote by $S_{\Z}(f)$ the set of all squarefree integers $d$ such that $C_d$ has a nontrivial \emph{integral} point.

\begin{thm}\label{abc_thm}
The abc conjecture implies that for every large enough prime $p$, and every integer $r$ not divisible by $p$, there exist infinitely many $d\in S_{\Z}(f)$ such that $d\equiv r\pmod p$.
\end{thm}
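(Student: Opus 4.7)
My plan is to produce integer points on $C_d$ of the specific form $(x_0, y_0) = (\ell^2 n + b,\ \ell)$, where $\ell$ is a carefully chosen auxiliary prime and $b, n$ are integers; then $d = f(x_0)/\ell^2$ will be the squarefree integer we want. The squarefree values will be supplied by Granville's theorem from~\cite{granville}, which asserts (under abc) that any polynomial $F \in \Z[x]$ of nonzero discriminant with no fixed square divisor takes squarefree values at a positive proportion of integers in any arithmetic progression free of local obstructions.

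I would first use the Weil bound on the quadratic character sum $\sum_{n \in \F_p} \chi(f(n))$, valid for $p$ large because $f$ has distinct roots and hence is not a constant times a square in $\F_p[x]$, to deduce that for $p$ sufficiently large the image $f(\F_p)$ meets both cosets of $(\F_p^*)^2$ in $\F_p^*$. Consequently there exist $c_0 \in \F_p^*$ and $a_0 \in \F_p$ with $f(a_0) \equiv r c_0^2 \pmod{p}$. Next, Chebotarev's theorem applied to the compositum of $\Q(\zeta_p)$ with the splitting field of $f$ furnishes a prime $\ell \ne p$ coprime to $\disc(f)$ with $\ell \equiv c_0 \pmod{p}$ and with $f \pmod{\ell}$ admitting a simple root $b_0$. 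Hensel's lemma lifts $b_0$ to an integer $b_1$ with $\ell^2 \mid f(b_1)$, and CRT combines this with the residue condition $b \equiv a_0 \pmod{p}$ to produce $b \in \Z$ satisfying simultaneously $\ell^2 \mid f(b)$ and $f(b) \equiv r \ell^2 \pmod{p}$.

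Setting $F(x) := f(\ell^2 x + b)/\ell^2$, the Taylor expansion of $f$ at $b$ shows $F \in \Z[x]$, and $F$ inherits nonzero discriminant from $f$ as an affine substitution. The crux is to verify $F$ has no fixed square divisor: for primes $q \ne \ell$ this follows directly from the corresponding property of $f$ because $\ell^2$ is a unit mod $q^2$, while for $q = \ell$ the expansion $F(n) \equiv f(b)/\ell^2 + n f'(b) \pmod{\ell^2}$ combined with $\ell \nmid f'(b)$ (the simple-root condition) shows that $\ell^2 \nmid F(n)$ for some $n$. Granville's theorem, applied to $F$ on the progression $n \equiv 0 \pmod{p}$, which has no local obstruction since $F(0) \equiv r \pmod{p}$ is nonzero, then produces infinitely many $n$ with $F(n)$ squarefree. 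For each such $n$, taking $x_0 = \ell^2 n + b$, $y_0 = \ell$, and $d = F(n)$ yields an integer point $(x_0, y_0)$ on $C_d$ with $d$ squarefree and $d \equiv F(0) \equiv r \pmod{p}$; distinct large $n$ produce distinct $d$ because $\deg F \ge 1$. The hardest part is the interlocking choice of $\ell$ and $b$ that arranges both the congruence $f(b) \equiv r\ell^2 \pmod{p}$ and the local hypotheses letting Granville's theorem apply to $F$; once $\ell$ is chosen to lie in the right residue class mod $p$ while also being a prime of simple reduction for $f$, the remaining verifications are routine.
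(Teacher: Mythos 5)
Your overall architecture matches the paper's: produce a mod-$p$ solution of $ry^2\equiv f(x)$ via point counting, pick an auxiliary prime $\ell$ by Chebotarev with $\ell$ in a prescribed class mod $p$ and $f$ having a simple root mod $\ell$, glue congruences by CRT, pass to the quotient polynomial $F(x)=f(\ell^2x+b)/\ell^2$, and invoke Granville. But there is a genuine gap at the step where you verify that $F$ has no fixed square divisor: for primes $q\ne\ell$ you assert this ``follows directly from the corresponding property of $f$.'' The polynomial $f$ has no such property in general --- the only hypotheses are that $f$ is nonconstant with $\disc f\ne 0$, and such an $f$ can perfectly well have a fixed square divisor. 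For instance, $f(x)=3(x^2+x)^2+4$ satisfies $4\mid f(n)$ for every integer $n$ (since $x^2+x$ is always even), so for any odd $\ell$ your $F(n)=f(\ell^2n+b)/\ell^2$ is divisible by $4$ for all $n$, Granville's theorem does not apply, and indeed $F$ never takes squarefree values. Note that this obstruction does not go away for large $p$: the fixed divisor $D$ of $f$ is independent of $p$, so you cannot absorb it into the ``$p$ large enough'' hypothesis.

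This is precisely why the paper's Proposition \ref{S_reverse_direction} carries the extra machinery you omit: it divides $f(ax+b)$ not just by the auxiliary $q^2p^{\cdots}$ but by the full fixed divisor $D$ (via $\Delta=Dq^2p^{2(v-k)+\epsilon}$), and it chooses $b$ so that for each prime $s\mid D$ one has $s^{e_s+1}\nmid f(b)$, which is what guarantees the quotient polynomial has no fixed prime divisor. Correspondingly, the witness point has $y_0$ divisible by the square part of $D$ rather than being the bare prime $\ell$, and the target residue gets twisted by the squarefree part $\delta$ of $D/p^\epsilon$, with the class of $\ell$ mod $p$ adjusted ($qu\equiv z_0$) to compensate. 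Your argument goes through verbatim only under the additional assumption $D=1$ (or at least that $D$ is squarefree); to cover general $f$ you must build the elimination of $D$ into the substitution as the paper does. A secondary, smaller point: the progression-version of Granville you quote is not literally Proposition \ref{granville_prop}; you should make the reduction explicit by substituting $n\mapsto pn$ and checking the local conditions for the new polynomial, which is routine but is exactly where the fixed-divisor issue surfaces.
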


Second, we prove an unconditional result by using work of Greaves \cite{greaves}.

\begin{thm}\label{unconditional_thm}
Conjecture \ref{krumm_conjecture} holds if every irreducible factor of $f(x)$ over $\Q$ has degree at most six.
\end{thm}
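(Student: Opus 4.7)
Here is my plan. Let $n = \deg f$ and define $F(a,b) = b^n f(a/b) \in \Z[a,b]$; the irreducible factorization of $F$ over $\Q$ is obtained by homogenizing that of $f$, so every irreducible factor of $F$ over $\Q$ has degree at most six. To treat both parities of $n$ uniformly, set
\[
H(a,b) = \begin{cases} F(a,b), & n \text{ even},\\ b\,F(a,b), & n \text{ odd}, \end{cases}
\]
a binary form of even degree $2m$ whose irreducible factors over $\Q$ still all have degree at most six. The key elementary observation is that for any $(a,b) \in \Z^2$ with $b \ne 0$ and $H(a,b) \ne 0$, writing $H(a,b) = d k^2$ with $d$ squarefree, the pair $(a/b,\, k/b^m)$ is a rational point on $C_d$ with nonzero $y$-coordinate. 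Hence the squarefree part of every nonzero value $H(a,b)$ already lies in $S_\Q(f)$.

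Given a large prime $p$ and an integer $r$ with $p \nmid r$, the plan is to exhibit infinitely many $(a,b) \in \Z^2$ for which $H(a,b)$ is squarefree and $H(a,b) \equiv r \pmod p$. The first step is to hit the right residue class mod $p$. For $p$ outside a finite set of bad primes (depending only on $f$), $H$ reduces to a squarefree binary form over the algebraic closure of $\F_p$, and for each $c \in \F_p^{\times}$ the affine curve $\{H(a,b) = c\}$ is smooth and geometrically irreducible of genus bounded in terms of $f$. The Hasse--Weil bound then provides at least $p - O_f(\sqrt p) > 0$ points over $\F_p$ on this curve for $p$ large, so the map $H \colon \F_p^2 \to \F_p$ is surjective. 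Choose $(a_0,b_0) \in \Z^2$ with $H(a_0,b_0) \equiv r \pmod p$; automatically $b_0 \not\equiv 0 \pmod p$ since $H$ vanishes along $\{b = 0\}$.

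The second step is to apply Greaves's theorem \cite{greaves} to the one-variable polynomial $A(t) := H(a_0 + pt,\, b_0) \in \Z[t]$. Its irreducible factors in $\Q[t]$ are obtained from those of $H$ by specializing $b = b_0$ and substituting $a = a_0 + pt$; since these operations do not increase degree, every irreducible factor of $A$ has degree at most six. Greaves then yields infinitely many $t \in \Z$ with $A(t)$ squarefree, provided no prime $\ell$ satisfies $\ell^2 \mid A(t)$ for all $t$. At $\ell = p$ this obstruction is absent because $A(t) \equiv r \not\equiv 0 \pmod p$ for every $t$. Obstructions at primes $\ell \ne p$ depend only on the polynomial $H(a,b_0)$ and not on the progression mod $p$, so by replacing $b_0$ with a suitable integer in the same residue class mod $p$ (possible since $p$ is large) we may assume no such obstruction occurs.

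Each such $t$ gives $a := a_0 + pt$ with $H(a,b_0)$ squarefree and $\equiv r \pmod p$, so $d := H(a,b_0)$ is a squarefree integer in $S_\Q(f)$ with $d \equiv r \pmod p$. Since $A(t)$ is non-constant, infinitely many distinct $d$'s arise. The main obstacle I foresee is coordinating the two ingredients: Hasse--Weil to secure surjectivity of $H$ mod $p$ (so that any nonzero residue can be prescribed), together with Greaves's theorem in the induced arithmetic progression, while confirming that the local-density factor at every prime $\ell \ne p$ remains positive after the specialization and restriction.
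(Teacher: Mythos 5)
The fatal step is your second one: you specialize the binary form to $A(t) = H(a_0+pt,\,b_0)$ with $b_0$ \emph{fixed} and then invoke Greaves to get infinitely many $t$ with $A(t)$ squarefree. But Greaves's theorem is a statement about binary forms with \emph{both} variables ranging freely (over a box, or over arithmetic progressions in each variable separately); the two-dimensional family of arguments is precisely what lets the sieve reach irreducible factors of degree six. Once you freeze $b = b_0$, you are asking for squarefree values of a one-variable polynomial whose irreducible factors can have degree up to six, and unconditionally this is known only when every irreducible factor has degree at most three (Hooley); for degree four and beyond it is open without abc. So for any $f$ with an irreducible factor of degree $4$, $5$, or $6$ --- exactly the cases Theorem \ref{unconditional_thm} is meant to add --- your argument has no tool to finish. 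The paper's proof avoids this by applying Greaves with the congruence conditions $\alpha\equiv q$, $\beta\equiv 1\pmod{pD}$ imposed on each variable while letting both vary, and then passing from the pair $(\alpha,\beta)$ to the rational point $x=\alpha/\beta$ on the twist; this is also why the unconditional theorem only produces rational (not integral) points, whereas your specialization $b=b_0$ would have produced essentially integral data --- a sign that something too strong is being claimed.

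Two secondary issues, fixable but currently unaddressed: (i) fixed square divisors of $f$ (equivalently of $H$) can force $H(a,b)$ to never be squarefree, so one must first extract the maximal fixed divisor $D$ and twist back by its squarefree part, as the paper does via the polynomial $g$ and the factor $\delta$; your plan of ``replacing $b_0$ by a suitable integer in the same class mod $p$'' does not handle the case where the obstruction at $\ell$ is present for every choice of $(a,b)$. (ii) The Hasse--Weil step needs the affine curve $H(a,b)=c$ to be geometrically irreducible of bounded genus, which requires an argument (the paper instead applies Hasse--Weil to the curve $ry^2=f(x)$, where irreducibility and the genus are immediate).
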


In addition, we consider the distribution of elements of $S_{\Z}(f)$ modulo $p$ when $p$ is a ``small" prime, by which we mean that at least one of the conditions in \eqref{p_sufficient_conditions} is not satisfied.

\section{Assuming abc: Proof of Theorem \ref{abc_thm}}

We will need the following special case of Theorem 1 in \cite{granville}.

\begin{prop}[Granville]\label{granville_prop} Assume the abc conjecture is true. Let $g(x)$ be a nonconstant polynomial with integer coefficients and nonzero discriminant, and suppose that there is no prime $p$ such that $p|g(n)$ for all integers $n$. Then there exist infinitely many integers $n$ such that $g(n)$ is squarefree.
\end{prop}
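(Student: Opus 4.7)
The plan is to establish a lower bound on
\[
\mathcal{N}(N) := \#\{1 \le n \le N : g(n) \text{ is squarefree}\}
\]
that grows without bound in $N$. Starting from the identity $\mathcal{N}(N) = \sum_{n \le N} \sum_{e : e^2 \mid g(n)} \mu(e)$, I would split according to a parameter $z = z(N) \to \infty$ slowly, writing $\mathcal{N}(N) = M(N,z) + E(N,z)$ where $M$ collects the contributions of $e \le z$ and $E$ those of $e > z$.

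For $M(N,z)$, the Chinese remainder theorem gives $\#\{n \le N : e^2 \mid g(n)\} = (\rho_g(e^2)/e^2)\,N + O(\rho_g(e^2))$, where $\rho_g(m) = \#\{a \pmod m : m \mid g(a)\}$. Since $\rho_g$ is multiplicative on coprime arguments, $\sum_{e \le z} \mu(e) \rho_g(e^2)/e^2$ equals a truncation of the Euler product $c_g := \prod_q (1 - \rho_g(q^2)/q^2)$. The hypothesis on $g$ guarantees $\rho_g(q^2) < q^2$ for every prime $q$ (otherwise $q$ would be a fixed prime divisor), so every local factor is strictly positive; combined with the bound $\rho_g(q^2) \le \deg g$ for $q \nmid \disc g$ (Hensel's lemma), this forces the infinite product to converge to a positive constant $c_g > 0$, and hence $M(N,z) = (c_g + o(1))\,N$ as $z \to \infty$.

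The error term $E(N,z)$ is dominated by $\sum_{q > z} \#\{n \le N : q^2 \mid g(n)\}$, summed over primes $q$. For the range $z < q \le \sqrt N$, Hensel's lemma yields $\#\{n \le N : q^2 \mid g(n)\} \le \deg g \cdot (N/q^2 + 1)$, and a direct estimate gives a total contribution of $O(N/z + \sqrt N) = o(N)$. The main obstacle is the range $q > \sqrt N$, where trivial bounds break down and the abc conjecture becomes indispensable: given such $(n,q)$, I would factor $g$ over its splitting field $K$ and note that, since $q$ may be assumed unramified and coprime to $\lc(g) \cdot \disc g$, the divisibility $q^2 \mid g(n)$ forces some prime $\q$ of $\O_K$ above $q$ to divide $(n - \alpha)$ to order at least $2$ for some root $\alpha$ of $g$. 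Applying the number-field analogue of the abc conjecture (which follows from the usual abc) to the three-term identity $(n - \alpha) - (n - \beta) = \beta - \alpha$ for a second root $\beta \ne \alpha$ (available because $\disc g \ne 0$) then bounds the number of admissible $(n, q)$ by $O(N^{1-\delta})$ for some $\delta > 0$. Assembling the pieces yields $\mathcal{N}(N) \ge (c_g/2)\,N$ for all large $N$, which in particular gives the desired infinitude.
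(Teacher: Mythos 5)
First, a point of comparison: the paper does not prove this proposition at all --- it is imported as a special case of Theorem~1 of Granville's \emph{ABC allows us to count squarefrees} --- so you are attempting a proof the authors deliberately outsource. Your outer skeleton is the standard one and is fine: the M\"obius decomposition via $\mu^2(m)=\sum_{e^2\mid m}\mu(e)$, the main term $(c_g+o(1))N$ with $c_g=\prod_q(1-\rho_g(q^2)/q^2)>0$ (positivity of each factor from the no-fixed-prime-divisor hypothesis, convergence from $\rho_g(q^2)\le\deg g$ for $q\nmid \lc(g)\disc g$), and the elementary disposal of the moderate range of $q$ (which in fact works for all $q\le N^{1-\delta}$, not just $q\le\sqrt N$). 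None of this requires abc, and none of it is where the theorem lives.

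The gap is in the large-$q$ range, which is the entire content of Granville's theorem, and there are two distinct problems with your treatment of it. (1) You invoke ``the number-field analogue of the abc conjecture (which follows from the usual abc)'': that implication is not a known theorem --- abc over $\Q$ is not known to imply abc over a general number field $K$ --- and Granville's use of Belyi-map constructions over $\Q$ exists precisely to avoid assuming it. (2) Even granting abc over $K$, the application to $(n-\alpha)-(n-\beta)=\beta-\alpha$ yields essentially nothing. With $D=[K:\Q]$, the three terms have absolute logarithmic height about $\log N$, and when $\q^2\mid(n-\alpha)$ the conductor is at most $2\log N-D^{-1}\log N_{K/\Q}(\q)$; abc then gives only $N_{K/\Q}(\q)\le N^{D(1+\epsilon)}$, which is weaker than the trivial bound $q\le N^{\deg g/2}$ whenever $\q$ has small residue degree (e.g.\ $q$ split). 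More fundamentally, this is a constraint on each individual pair $(n,\q)$, not a counting device: it cannot produce the asserted $O(N^{1-\delta})$ bound on the number of exceptional $n$. The strongest known consequence of abc in this direction, $\mathrm{rad}(g(n))\gg_\epsilon |n|^{\deg g-1-\epsilon}$, is obtained by playing all $\deg g$ linear factors against one another at once (via a Belyi-type identity over $\Q$), not by pairwise two-term identities, and even it only shows $q\ll N^{1+\epsilon}$ whenever $q^2\mid g(n)$ with $n\le N$; a genuinely separate argument --- the delicate part of Granville's paper --- is still needed to show that few $n\le N$ admit such a $q$ in the remaining window $(N^{1-\delta},N^{1+\epsilon}]$. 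As written, your sketch asserts the hard step rather than proving it.
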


Recall that an integer $k$ is called a \emph{fixed divisor} of $f(x)$ if $k|f(n)$ for every integer $n$. The set of all fixed divisors of $f(x)$ is finite, and therefore has a largest element, which we denote by $D$. It is a simple exercise to show that $D$ is maximal also in the sense that every fixed divisor of $f(x)$ divides $D$.

Let $p$ be a prime number, let $\ord_p$ denote the $p$-adic valuation on $\Z$, and let $\epsilon\in\{0,1\}$ be the parity of $\ord_p(D)$. For every integer $r\not\equiv 0\pmod p$ and for every integer $v\ge 0$ we define a statement $S(r,v)$ as follows:
\vspace{-1mm}
\begin{equation}\label{S_statement}
S(r,v)\;
\begin{cases}
\text{There exist $h,x_0,y_0\in\Z$ satisfying}\\
\;\;\bullet\;\;hy_0^2\equiv f(x_0)\pmod*{p^{2(v+\epsilon) + 1}},\\
\;\;\bullet\;\;\ord_p(y_0)=v+\epsilon,\text{ and}\\
\;\;\bullet\;\;h\equiv r\pmod* p.
\end{cases}
\end{equation}

The proof of the following proposition establishes the key ideas to be used throughout this article.

\begin{prop}[Assuming abc]\label{S_reverse_direction} Let $r$ be an integer not divisible by $p$.
Suppose that $S(r,v)$ holds true for some $v\ge 0$, and that $f(x)$ has an irreducible factor whose discriminant is not divisible by $p$. Then there exist infinitely many integers $d\in S_{\Z}(f)$ such that $d\equiv r\pmod p$.
\end{prop}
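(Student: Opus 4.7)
My plan is to apply Granville's Proposition~\ref{granville_prop} to an auxiliary integer polynomial $G(t)$ built from the data of $S(r,v)$, whose squarefree values will produce the desired integers $d$.

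The first step is to refine the triple witnessing $S(r,v)$ so that $y_0 = p^{v+\epsilon}$ and simultaneously $\ord_q(f(x_0)) = \ord_q(D)$ for every prime $q \mid D'$, where $D'$ is the prime-to-$p$ part of the fixed divisor $D$ of $f$. The $q$-adic conditions are achievable because $D$ is the \emph{largest} fixed divisor of $f$, and they are combined via CRT. On the $p$-adic side, the discriminant hypothesis enters crucially: by Hensel's lemma the irreducible factor $f_1$ with $p \nmid \disc(f_1)$ admits a root $\alpha \in \Z_p$ (lifted from a simple root mod $p$), and near $\alpha$ the map $x \mapsto f(x)/p^{\ord_p(f(x))} \bmod p$ surjects onto $(\Z/p)^\times$; thus I may choose $x_0 \in \Z$ with $f(x_0)/p^{2(v+\epsilon)} \equiv r \pmod p$ on the nose, eliminating the stray square factor $(y_0')^2$ that would otherwise obstruct the final congruence.

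With $x_0$ so refined, set $M := p^{2(v+\epsilon)+1} \prod_{q \mid D'} q^{\ord_q(D)+1}$ and define
\[
G(t) := \frac{f(x_0 + Mt)}{p^{2(v+\epsilon)}\, D'}.
\]
The choice of $M$ guarantees $\ord_p(f(x_0+Mt)) = 2(v+\epsilon)$ and $\ord_q(f(x_0+Mt)) = \ord_q(D)$ for every $t \in \Z$ and every $q \mid D'$, so $G$ is integer-valued on $\Z$, and a Taylor expansion argument then shows $G \in \Z[t]$. It inherits nonzero discriminant from $f$; moreover $G(t) \equiv r/D' \pmod p$ (a nonzero unit) and $G(t) \not\equiv 0 \pmod q$ for each $q \mid D'$, so no prime is a fixed divisor of $G$.

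Granville's Proposition~\ref{granville_prop} then yields infinitely many $t \in \Z$ with $G(t)$ squarefree. For each such $t$, setting $X := x_0 + Mt$ and $d := D' \cdot G(t)$ produces a squarefree integer $d \equiv r \pmod p$ satisfying $f(X) = d \cdot (p^{v+\epsilon})^2$, so $(X, p^{v+\epsilon})$ is a nontrivial integral point on $C_d$ and $d \in S_\Z(f)$. The principal obstacle is the first step—arranging $f(x_0)/p^{2(v+\epsilon)} \equiv r \pmod p$ precisely rather than merely up to squares—which is exactly where the discriminant hypothesis on $f_1$ is employed via Hensel's lemma.
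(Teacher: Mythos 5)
Your overall architecture (substitute $x_0+Mt$ into $f$, divide out the forced power of $p$ and the fixed divisor, apply Granville, read off $d$ as the squarefree value) matches the paper's, but your first step contains a fatal error, and it is exactly the step where the real difficulty lives. You claim that because some irreducible factor $f_1$ has $p\nmid\disc f_1$, Hensel's lemma gives a root $\alpha\in\Z_p$ near which $x\mapsto f(x)/p^{\ord_p f(x)} \bmod p$ surjects onto $(\Z/p)^\times$, so that $x_0$ can be adjusted to make $f(x_0)/p^{2(v+\epsilon)}\equiv r\pmod p$ on the nose. But $p\nmid\disc f_1$ only says $f_1$ is separable mod $p$; it does not give $f_1$ a root in $\F_p$, hence no root in $\Z_p$ to lift. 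Worse, the adjustment you want is impossible in general: for $f(x)=x^p-x+1$ one has $p\nmid\disc f$ and $f(n)\equiv 1\pmod p$ for every integer $n$, so the normalized values hit only the class of $1$; the paper shows $R(p)=(\F_p^\ast)^2$ here, whereas your step would force $R(p)=\F_p^\ast$. The hypothesis $S(r,v)$ only gives $f(x_0)/p^{2(v+\epsilon)}\equiv rz_0^2\pmod p$ --- $r$ times an uncontrolled square --- and that square cannot be removed by moving $x_0$.

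The paper's device for killing the stray square is different and is where the discriminant hypothesis is genuinely used: by Chebotarev (possible because the splitting field of $f_1$ meets $\Q(\zeta_p)$ trivially when $p\nmid\disc f_1$), one picks an auxiliary prime $q$ simultaneously satisfying $qu\equiv z_0\pmod p$ and such that $f$ has a simple root mod $q$; inserting $q^2$ into the substitution yields $f(an+b)=\delta g(n)(qu)^2p^{2(v+\epsilon)}$, so the factor $(qu)^2\equiv z_0^2\pmod p$ exactly cancels the discrepancy when one reduces $d(qu)^2\equiv hz_0^2\pmod p$. Without some such mechanism your argument cannot close. A secondary (fixable) flaw: the prime-to-$p$ part $D'$ of $D$ need not be squarefree (e.g.\ $D=24$ for $x(x+1)(x+2)(x+3)$), so $d=D'G(t)$ need not be squarefree; you must split off $\sqf(D')$ and push the square part of $D'$ into the $y$-coordinate, as the paper does with $D=\sqf(D)t^2$.
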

\begin{proof}
For every nonzero rational number $x$ we denote by $\sqf(x)$ the squarefree part of $x$, i.e., the unique squarefree integer representing the coset of $x$ in $(\Q^{\ast})/(\Q^{\ast})^2$. By definition of $\epsilon$, we have $\ord_p(D)=2k+\epsilon$ for some nonnegative integer $k$. Writing $D=\sqf(D)t^2$, it is necessarily the case that $\ord_p(t)=k$ and $\ord_p(\sqf(D))=\epsilon$; thus, we may write $t=p^ku$, where $p\nmid u$, and $\sqf(D)=p^{\epsilon}\delta$ for some squarefree integer $\delta$ not divisible by $p$.

Since $S(r,v)$ holds true, there exist integers $h, x_0,$ and $y_0$ satisfying the properties listed in \eqref{S_statement}. In particular, $\ord_p(y_0)=v+\epsilon$, so we may write $y_0=p^{v+\epsilon}z_0$, where $p\nmid z_0$.

By the Chebotarev density theorem\footnote{See Lemma 4.4 in \cite{krumm} for details. The crucial fact we use here is that if $h(x)$ is an irreducible factor of $f(x)$ such that $p\nmid\disc h(x)$, then the intersection of the splitting field of $h(x)$ and the cyclotomic field $\Q(\zeta_p)$ is $\Q$.}, there exists a prime $q\nmid D$ such that $qu\equiv z_0\pmod p$ and $f(x)$ has a simple root modulo $q$. The latter property ensures, via Hensel's lemma\footnote{Let $\alpha$ be a simple root of $f(x)$ modulo $q$. Hensel lifting allows us to find an integer $\beta$ such that $\beta\equiv\alpha\pmod q$ and $f(\beta)\equiv 0\pmod{q^3}$. Then $m=\beta+q^2$ satisfies $q^2\|f(m)$.}, that there exists $m\in\Z$ such that $q^2\|f(m)$.

For every prime $s\ne p$ dividing $D$, let $e_s=\ord_s(D)$ and let $n_s$ be an integer such that $f(n_s)\not\equiv 0\pmod{s^{e_s+1}}$. (Such an integer must exist, for otherwise $\lcm(s^{e_s+1},D)=sD$ would be a fixed divisor of $f(x)$, contradicting the maximality of $D$.)

Choose $b\in\Z$ satisfying
\begin{align*}
b&\equiv x_0\pmod*{p^{2(v+\epsilon)+1}},\\
b&\equiv m\pmod*{q^3},\text{ and}\\
b&\equiv n_s\pmod*{s^{e_s+1}}\text{ for every prime } s|D, s\ne p.
\end{align*}
Let $a=q^2p^{2(v+\epsilon)+1}\prod_{s}s^{e_s+1}$, and define a polynomial $g(x)$ by the equation
\[\Delta\cdot g(x)=f(ax+b),\text{ where } \Delta=Dq^2p^{2(v-k)+\epsilon}.\]
Note that $v\ge k$, so that $\Delta\in\Z$. Indeed, the properties in \eqref{S_statement} imply that $\ord_p(f(x_0))=2(v+\epsilon)$. Since $D|f(x_0)$, then $\ord_p(D)\le\ord_p(f(x_0))$, so $2k+\epsilon\le 2v+2\epsilon$, and therefore $k\le v$.

We claim that $g(x)$ satisfies all the hypotheses of Proposition \ref{granville_prop}. A Taylor expansion shows that $f(ax+b)=f(b)+a\cdot P(x)$ for some polynomial $P(x)\in\Z[x]$. Thus, in order to show that $g(x)\in\Z[x]$ it suffices to show that $\Delta$ divides both $f(b)$ and $a$. From the definitions it follows easily that $\ord_{\ell}(a)\ge\ord_{\ell}(\Delta)$ for every prime $\ell$ dividing $\Delta$, so $\Delta|a$. Similarly, the definition of $b$ implies that $\Delta|f(b)$. Hence $g(x)\in\Z[x]$. Now suppose that $\ell$ is a fixed prime divisor of $g(x)$. We claim that $\ell\nmid a$. If $\ell=q$, then $q|g(q)$, so $q^3|f(aq+b)$. However, $f(aq+b)\equiv f(b)\equiv f(m)\not\equiv 0\pmod{q^3}$. Thus $\ell\ne q$. Suppose now that $\ell$ is one of the primes $s$, and let $n\in\Z$. Then $s|g(n)$, so $s^{e_s+1}|f(an+b)$. However, $f(an+b)\equiv f(b)\equiv f(n_s)\not\equiv 0\pmod{s^{e_s+1}}$. Thus $\ell\ne s$. Similarly, we can show that $p$ does not divide $g(n)$ for any integer $n$. For if $p|g(n)$, then $f(an+b)\equiv 0\pmod{p^{2(v+\epsilon)+1}}$. However, $f(an+b)\equiv f(b)\equiv f(x_0)\not\equiv 0\pmod*{p^{2(v+\epsilon)+1}}$.
This proves that $\ell\nmid a$. Now, since the map $x\mapsto (ax+b)$ is invertible modulo $\ell$, the assumption that $\ell$ is a fixed divisor $g(x)$ implies that it is also a fixed divisor of $f(x)$. It follows that $\ell| D$, but this has already been ruled out above. Therefore, $g(x)$ has no fixed prime divisor. Finally, $\disc g(x)\ne 0$ since $\disc f(x)\ne 0$ by assumption.

As shown above, neither $p$ nor any of the primes $s$ can divide $g(n)$ for any integer $n$. Thus,\begin{equation}\label{coprime_values}
\gcd(g(n), pD)=1\text{ for every integer }n.
\end{equation}

The last step in the proof is to show that there is a well-defined map
\[\psi:\{n\in\Z\;\vert\;g(n)\text{ is squarefree}\}\to\{d\in S_{\Z}(f)\;\vert\; d\equiv r\pmod* p\}\]
given by $n\mapsto \delta g(n)$. Note that the domain of $\psi$ is infinite by Proposition \ref{granville_prop}. Let $n\in\Z$ be such that $g(n)$ is squarefree. Tracking through the definitions we find that
\begin{equation}\label{f_value}
f(ax+b)=\delta g(x)(qu)^2p^{2v+2\epsilon}.
\end{equation}

By \eqref{coprime_values} we have $\gcd(g(n), \delta)=1$, so \eqref{f_value} implies that \[d:=\sqf(f(an+b))=\delta g(n).\]
Reducing \eqref{f_value} modulo $p^{2(v+\epsilon)+1}$  and recalling that $y_0=p^{v+\epsilon}z_0$, we obtain
\[d(qu)^2p^{2v+2\epsilon}\equiv f(b)\equiv f(x_0)\equiv hy_0^2\equiv hp^{2v+2\epsilon}z_0^2\pmod*{p^{2(v+\epsilon)+1}}.\]
It follows that $d(qu)^2\equiv hz_0^2\pmod* p$. Since $qu\equiv z_0\pmod p$ by construction and $h\equiv r\pmod p$ by the assumptions in \eqref{S_statement}, this implies that $d\equiv r\pmod p$. Moreover, it is clear from the definitions that $d\in S_{\Z}(f)$. Thus, we have shown that the map $\psi$ is well defined.

The equation $g(x)=g(y)$ has only finitely many integral solutions with $x\ne y$, since the polynomial map $x\mapsto g(x)$ is injective outside some bounded interval in the real line. Hence, the fact that the domain of $\psi$ is infinite implies that its image is infinite as well. This completes the proof.
\end{proof}

\begin{rems}\mbox{}
\begin{enumerate}[leftmargin=7mm]
\item[(i)] Proposition \ref{granville_prop} is known to hold unconditionally if every irreducible factor of $f(x)$ has degree at most three (see \cite[Chap. 4]{hooley}). Our arguments show that Proposition \ref{S_reverse_direction} also holds unconditionally in this case.
\item[(ii)] The version of Proposition \ref{granville_prop} given in \cite{granville} states that the number of positive integers $n\le B$ such that $g(n)$ is squarefree is asymptotic to $\kappa B$ (as $B\to\infty$) for some positive constant $\kappa$. Modifying the proof of Proposition \ref{S_reverse_direction} appropriately to take advantage of this, one can show that
\[\#\{d\in S_{\Z}(f): |d|\le B \text{ and }d\equiv r\pmod* p\}\gg B^{1/\deg f}.\]
\end{enumerate}
\end{rems}

\begin{cor}[Assuming abc]\label{large_p_cor}
Let $r$ be an integer not divisible by $p$. Suppose that $p\nmid D$, $p\nmid\disc f(x)$, and $ry_0^2\equiv f(x_0)\pmod p$ for some integers $x_0,y_0$ with $p\nmid y_0$. Then there exist infinitely many integers $d\in S_{\Z}(f)$ such that $d\equiv r\pmod p$.
\end{cor}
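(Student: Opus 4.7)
The plan is to deduce this directly from Proposition \ref{S_reverse_direction} (the abc-conditional result proved just above) by verifying its two hypotheses in the situation at hand. Both verifications are essentially bookkeeping.

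For the discriminant hypothesis, I would invoke the standard multiplicativity formula
\[\disc(gh) = \disc(g)\,\disc(h)\,\operatorname{Res}(g,h)^2\]
valid for monic polynomials, together with the usual adjustments for leading coefficients in the general case. Iterating across a factorization of $f(x)$ into irreducibles shows that $\disc(f_i)$ divides $\disc f(x)$ up to leading-coefficient factors for each irreducible factor $f_i$. Hence $p\nmid\disc f(x)$ forces $p\nmid\disc(f_i)$ for every irreducible factor, and in particular such a factor exists.

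For the remaining hypothesis, I would exhibit an instance of the statement $S(r,v)$ from \eqref{S_statement}. Because $p\nmid D$ we have $\ord_p(D)=0$, which is even, so $\epsilon=0$ in the notation preceding \eqref{S_statement}. I would then take $v=0$ and set $h:=r$, whereupon the three bullets in \eqref{S_statement} collapse to $ry_0^2\equiv f(x_0)\pmod p$, $\ord_p(y_0)=0$, and $h\equiv r\pmod p$. These are exactly the hypotheses of the corollary (with $p\nmid y_0$ providing the middle condition). Thus $S(r,0)$ holds, and Proposition \ref{S_reverse_direction} supplies the desired infinitude of $d\in S_{\Z}(f)$ with $d\equiv r\pmod p$.

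There is no substantial obstacle to overcome — the corollary is little more than a readable repackaging of the $v=\epsilon=0$ case of Proposition \ref{S_reverse_direction}, singled out to highlight the ``large prime'' regime that will govern the subsequent applications. The only point that warrants even a moment's pause is the passage from $\disc f(x)$ to $\disc(f_i)$, and this is entirely standard.
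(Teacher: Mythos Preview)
Your proposal is correct and follows exactly the paper's approach: verify that $S(r,0)$ holds (using $p\nmid D$ to get $\epsilon=0$) and invoke Proposition~\ref{S_reverse_direction}. The only difference is cosmetic: the paper compresses everything into the single sentence ``The hypotheses imply that the statement $S(r,0)$ holds true,'' leaving both the $\epsilon=0$ computation and the passage from $p\nmid\disc f$ to the irreducible-factor discriminant condition implicit, whereas you spell both out.
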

\begin{proof}
The hypotheses imply that the statement $S(r,0)$ holds true. The result then follows immediately from Proposition \ref{S_reverse_direction}.
\end{proof}

\subsection*{Proof of Theorem \ref{abc_thm}}
Assuming the abc conjecture, we must show that for every large enough prime $p$, and every integer $r$ not divisible by $p$, there exist infinitely many $d\in S_{\Z}(f)$ such that $d\equiv r\pmod p$. Let $\lc(f)$ be the leading coefficient of $f(x)$, and let $g$ be the genus of the curve $y^2=f(x)$. Suppose that $p$ is a prime satisfying
\begin{equation}\label{p_sufficient_conditions}
p\nmid\lc(f),\;p\nmid D,\;p\nmid\disc f(x),\;\text{and }p>4g^2+6g+4.
\end{equation}
Let $r$ be an integer not divisible by $p$. The Hasse-Weil bound implies that every smooth projective curve of genus $g$ over $\F_{p}$ has at least $2g+5$ points defined over $\F_p$; in particular, this applies to the hyperelliptic curve over $\F_p$ defined by $ry^2=f(x)$. This curve can have at most $2g+4$ trivial points defined over $\F_p$, so it must have a nontrivial point. Applying Corollary \ref{large_p_cor} we obtain the desired result.
\qed

\section{The case of small primes $p$}

Let $R(p)\subseteq\F_p^{\ast}$ be the set consisting of all the nonzero residue classes modulo $p$ which are represented in the set $S_{\Z}(f)$. We have shown that if $p$ is large enough, then $R(p)=\F_p^{\ast}$. In this section we discuss the problem of determining $R(p)$ when $p$ is a ``small" prime, meaning that the conditions \eqref{p_sufficient_conditions} are not all satisfied.

\begin{lem}\label{square_classes_lem}
Let $r$ be an integer not divisible by $p$, and let $v$ be a nonnegative integer. Suppose that $S(r,v)$ holds. Then $S(a,v)$ holds for every integer $a$ in the same square class as $r$ modulo $p$.
\end{lem}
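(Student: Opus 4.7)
The plan is to produce witnesses for $S(a,v)$ by a simple scaling of the given witnesses $(h,x_0,y_0)$ for $S(r,v)$. Since $a$ and $r$ lie in the same square class modulo $p$, write $a\equiv rc^2\pmod p$ with $p\nmid c$. The natural move is to keep the $x$-coordinate and the $y$-coordinate essentially the same, and absorb the factor $c^2$ into $h$, so set $h':=hc^2$; then automatically $h'\equiv rc^2\equiv a\pmod p$, handling the third bullet of \eqref{S_statement}.

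The remaining task is to find $y_0'$ with $\ord_p(y_0')=v+\epsilon$ satisfying $h'(y_0')^2\equiv f(x_0)\pmod{p^{2(v+\epsilon)+1}}$ (with $x_0'=x_0$). Since $p\nmid c$, the integer $c$ is a unit modulo $N:=p^{2(v+\epsilon)+1}$, so pick $c'\in\Z$ with $cc'\equiv 1\pmod N$ and set $y_0':=c'y_0$. Because $c'$ is a $p$-adic unit, $\ord_p(y_0')=\ord_p(y_0)=v+\epsilon$, which gives the second bullet. For the first bullet, compute
\[
h'(y_0')^2 \;=\; hc^2\,(c')^2\,y_0^2 \;=\; h(cc')^2 y_0^2 \;\equiv\; hy_0^2 \;\equiv\; f(x_0) \pmod{N},
\]
using $(cc')^2\equiv 1\pmod N$ and the hypothesis $S(r,v)$.

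There is really no obstacle here: the whole argument rests on the fact that any integer coprime to $p$ is invertible modulo every power of $p$, which makes every step an elementary verification. The only point worth flagging is that one must use an inverse of $c$ modulo the full power $p^{2(v+\epsilon)+1}$ (not merely modulo $p$), since the congruence for $h'(y_0')^2$ must hold to that precision; choosing such an inverse by Bezout/Hensel is immediate because $\gcd(c,p)=1$.
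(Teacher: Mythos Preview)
Your proof is correct and follows essentially the same approach as the paper: scale $h$ by a square $c^2$ and compensate by multiplying $y_0$ by an inverse of $c$ modulo $p^{2(v+\epsilon)+1}$. The only cosmetic difference is that the paper writes $c=g^k$ for a primitive root $g$ modulo $p$, whereas you choose $c$ directly; your formulation is slightly more economical but the argument is identical.
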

\begin{proof}
Let $h, x_0,$ and $y_0$ be integers satisfying the conditions in \eqref{S_statement}. Let $g$ be a primitive root modulo $p$, and let $z$ be a multiplicative inverse of $g$ modulo $p^{2(v+\epsilon)+1}$. By hypothesis, $a\equiv rg^{2k}\pmod p$ for some positive integer $k$. From the definitions it follows that

\begin{itemize}[itemsep=2mm]
\item $hg^{2k}(z^ky_0)^2\equiv hy_0^2\equiv f(x_0)\pmod{p^{2(v+\epsilon)+1}}$,
\item $\ord_p(z^ky_0)=\ord_p(y_0)=v+\epsilon$, and
\item $hg^{2k}\equiv rg^{2k}\equiv a\pmod p$.
\end{itemize}
Hence, $S(a,v)$ holds.
\end{proof}

\begin{prop}[Assuming abc]\label{reduction_possibilities}
Suppose that $f(x)$ has an irreducible factor whose discriminant is not divisible by $p$. Then $R(p)$ is either empty or equal to one of the sets $\F_p^{\ast}$, $(\F_p^{\ast})^2$, or $\F_p^{\ast}\setminus(\F_p^{\ast})^2$.
\end{prop}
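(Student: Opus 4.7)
The plan is to prove that $R(p)$, when nonempty, is a union of cosets of $(\F_p^\ast)^2$ in $\F_p^\ast$; since $\F_p^\ast/(\F_p^\ast)^2$ has order at most $2$, only the four listed possibilities arise.

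To implement this, I would start from an arbitrary element $r\in R(p)$ and show that $S(r,v)$ holds for some $v\ge 0$. By definition there exists $d\in S_\Z(f)$ with $d\equiv r\pmod p$, together with integers $x_0,y_0$, $y_0\ne 0$, such that $dy_0^2=f(x_0)$. Setting $h=d$, the congruence $hy_0^2\equiv f(x_0)$ holds modulo every power of $p$. The only nontrivial point is verifying that the $p$-adic valuation of $y_0$ is at least $\epsilon$, so that we may set $v=\ord_p(y_0)-\epsilon\ge 0$. This is where the definition of $D$ enters: since $D\mid f(x_0)$, we have $\ord_p(f(x_0))\ge\ord_p(D)=2k+\epsilon$, while $\ord_p(dy_0^2)=\ord_p(d)+2\ord_p(y_0)\le 1+2\ord_p(y_0)$ because $d$ is squarefree and $p\nmid d$ (as $d\equiv r\not\equiv 0\pmod p$). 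Comparing these forces $2\ord_p(y_0)\ge 2k+\epsilon$, hence $\ord_p(y_0)\ge k+\lceil\epsilon/2\rceil\ge\epsilon$.

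Having established $S(r,v)$ for this specific $v$, Lemma \ref{square_classes_lem} immediately upgrades it to $S(a,v)$ for every integer $a$ in the same square class as $r$ modulo $p$. Proposition \ref{S_reverse_direction} then asserts, using the hypothesis on the irreducible factor of $f(x)$, that each such $a$ belongs to $R(p)$. Thus the square class of $r$ in $\F_p^\ast$ lies entirely inside $R(p)$, proving that $R(p)$ is a union of square classes.

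I expect the main obstacle to be the valuation inequality $\ord_p(y_0)\ge\epsilon$: one must exploit the maximality of the fixed divisor $D$ to pin down the $p$-adic behavior of $f(x_0)$, distinguishing carefully between the $\epsilon=0$ and $\epsilon=1$ cases. The rest of the argument is a clean assembly of Lemma \ref{square_classes_lem} and Proposition \ref{S_reverse_direction}, followed by the trivial observation that $\F_p^\ast$ has at most two square classes, which yields the four listed alternatives.
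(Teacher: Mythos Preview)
Your proposal is correct and follows essentially the same route as the paper: from $d\in S_{\Z}(f)$ with $d\equiv r\pmod p$ you set $v=\ord_p(y_0)-\epsilon$, verify $S(r,v)$, invoke Lemma~\ref{square_classes_lem} to pass to every $a$ in the same square class, and then apply Proposition~\ref{S_reverse_direction}. The paper merely asserts that ``it is easy to verify that $v\ge 0$,'' whereas you spell out the valuation argument; note that since you already observe $p\nmid d$, you have $\ord_p(dy_0^2)=2\ord_p(y_0)$ exactly (not just $\le 1+2\ord_p(y_0)$), which is what your conclusion $2\ord_p(y_0)\ge 2k+\epsilon$ actually uses.
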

\begin{proof}
We claim that if $R(p)$ contains a square, then $R(p)\supseteq(\F_p^{\ast})^2$. Let $a$ and $r$ be nonzero quadratic residues modulo $p$, and suppose that there exists $d\in S_{\Z}(f)$ such that $d\equiv r\pmod p$. Then we have $dy_0^2=f(x_0)$ for some integers $x_0,y_0$ with $y_0\ne 0$. Letting $v=\ord_p(y_0)-\epsilon$, it is easy to verify that $v\ge 0$ and $S(r,v)$ holds. By Lemma \ref{square_classes_lem}, $S(a,v)$ also holds. Hence, by Proposition \ref{S_reverse_direction}, there exists $d'\in S_{\Z}(f)$ such that $d'\equiv a\pmod p$. This proves the claim. A similar argument shows that if $R(p)$ contains a nonsquare, then $R(p)\supseteq\F_p^{\ast}\setminus(\F_p^{\ast})^2$.

Suppose that $R(p)$ is nonempty. If $R(p)$ contains only squares, then the above argument implies that $R(p)=(\F_p^{\ast})^2$; similarly, if $R(p)$ contains only nonsquares, then $R(p)=\F_p^{\ast}\setminus(\F_p^{\ast})^2$. Finally, if $R(p)$ contains both a square and a nonsquare, then $R(p)=\F_p^{\ast}$.
\end{proof}

We now provide examples in which the various possibilities of Proposition \ref{reduction_possibilities} occur with small primes $p$.

\begin{ex}
Let $p$ be any prime such that $p\equiv 3\pmod 4$, and consider the polynomial $f(x)=(x^2+1)((x^p-x)^2+p)$. Note that $f(x)$ has a repeated root modulo $p$, so that $p|\disc f(x)$, and $p$ is a small prime for $f(x)$. We have $\ord_p(f(n))=1$ for every integer $n$, which implies that $p|\sqf(f(n))$ for all $n$. Hence, every element of $S_{\Z}(f)$ is divisible by $p$, and $R(p)=\emptyset$.
\end{ex}

\begin{ex}
Let $p$ be an arbitrary prime, and consider the polynomial $f(x)=x^p-x+1$. Note that $p$ is small for $f(x)$. We claim that $R(p)=(\F_p^{\ast})^2$. Let $r$ be an integer not divisible by $p$, and suppose that $d\in S_{\Z}(f)$ satisfies $d\equiv r\pmod p$. Then $dy_0^2=x_0^p-x_0+1$ for some integers $x_0,y_0$. Reducing modulo $p$ we obtain $ry_0^2\equiv 1\pmod p$, from which it follows that $r$ is a square modulo $p$. Thus, $R(p)\subseteq(\F_p^{\ast})^2$. Conversely, if $r$ is a nonzero square modulo $p$, then $ry_0^2\equiv 1\equiv f(x_0)\pmod p$ for some integer $y_0$ and for every integer $x_0$. Since $p\nmid D=1$ and $p\nmid\disc f(x)$, Corollary \ref{large_p_cor} implies that there exists $d\in S_{\Z}(f)$ such that $d\equiv r\pmod p$. Hence, $R(p)=(\F_p^{\ast})^2$, as claimed. A similar argument shows that if we define $f(x)=x^p-x+a$, where $a$ is a quadratic nonresidue modulo $p$, then $R(p)=\F_p^{\ast}\setminus(\F_p^{\ast})^2$.
\end{ex}

\begin{ex}
Let $p$ be prime, let $v$ be a nonnegative integer, and consider
\[f(x)=x(x^p-x)^{2v+2}+p^{2v+1}x.\]
We will show that $R(p)=\F_p^{\ast}$. Note that $p|\disc f(x)$, so $p$ is small for $f(x)$. Clearly, $p^{2v+1}$ is a fixed divisor of $f(x)$, so $p^{2v+1}|D$; in fact $p^{2v+1}\|D$ since $p^{2v+2}\nmid f(1)$. In particular, the parity of $\ord_p(D)$ is $\epsilon=1$. The statement $S(r,v)$ can now be seen to hold for every integer $r\not\equiv 0\pmod p$: indeed,
\[r(p^{v+1})^2\equiv f(rp)\pmod*{p^{2v+3}}.\]
Moreover, $f(x)$ has an irreducible factor (namely, $x$) whose discriminant is not divisible by $p$. Thus, by Proposition \ref{S_reverse_direction}, there exists $d\in S_{\Z}(f)$ such that $d\equiv r\pmod p$. We conclude that $R(p)=\F_p^{\ast}$.
\end{ex}

In the last example we show that when the discriminant condition in Proposition \ref{reduction_possibilities} is not satisfied, the conclusion may not hold.
\begin{ex}
Let $p$ be an odd prime, and let $f(x)$ be the $p$-th cyclotomic polynomial. Then $f(x)$ is irreducible and $p|\disc f(x)$. We will show that $R(p)=\{1\}$. Clearly $1\in R(p)$ because the curve $y^2=f(x)$ has a nontrivial integral point, namely $(0,1)$. Now suppose that $d\in S_f(\Z)$ is not divisible by $p$. We have $d>0$ because $f(x)$ only takes positive values for $x\in\R$. If $q$ is any prime dividing $d$, then $f(x)$ has a simple root modulo $q$. Let $K$ denote the cyclotomic field $\Q[x]/(f(x))$. By the Dedekind-Kummer theorem in algebraic number theory (Proposition 8.3 in \cite{neukirch}), some prime (and therefore every prime) of $\mathcal O_K$ lying over $(q)$ has ramification index and residue degree equal to 1. Hence, $q$ splits completely in $K$. It follows that $q\equiv 1\pmod p$ (see Corollary 10.4 in \cite{neukirch}, for instance). Since $d>0$ and every prime divisor of $d$ is congruent to 1 modulo $p$, then $d\equiv 1\pmod p$. Therefore, $R(p)=\{1\}$.
\end{ex}

\section{An unconditional result: proof of Theorem \ref{unconditional_thm}}

We will need the following special case of the main theorem in \cite{greaves}.
\begin{prop}[Greaves]\label{greaves_prop}
Let $F(x,y)\in\Z[x,y]$ be a binary form of degree $d$ with nonzero discriminant, and suppose that the coefficient of $y^d$ in $F(x,y)$ is nonzero. Let $A,B,M$ be integers with $M>0$. Assume that for every prime $\ell$ there exist integers $\alpha$ and $\beta$ such that
\begin{equation}\label{greaves_congruences}
\alpha\equiv A\pmod* M,\; \beta\equiv B\pmod* M,\text{ and }\;\ell^2\nmid F(\alpha,\beta).
\end{equation}
If every irreducible factor of $F(x,y)$ has degree at most six, then there exist infinitely many pairs of integers $\alpha,\beta$ such that $\alpha\equiv A\pmod M$, $\beta\equiv B\pmod M$, and $F(\alpha,\beta)$ is squarefree.
\end{prop}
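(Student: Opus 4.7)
The plan is to prove the statement via a sieve argument that combines Möbius inversion with geometry-of-numbers estimates, keeping the congruence conditions $\alpha \equiv A,\, \beta \equiv B \pmod{M}$ built in throughout. Define
\[N(X) = \#\{(\alpha,\beta) \in \Z^2 \cap [-X,X]^2 : \alpha \equiv A,\, \beta \equiv B \pmod{M},\; F(\alpha,\beta) \text{ squarefree}\},\]
and apply the identity $\mathbf{1}[n \text{ squarefree}] = \sum_{k^2 \mid n} \mu(k)$ to write $N(X) = \sum_{k \ge 1} \mu(k)\,T(k,X)$, where $T(k,X)$ counts pairs in the box satisfying the mod-$M$ congruences with $k^2 \mid F(\alpha,\beta)$. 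Only $k \ll X^{d/2}$ contribute, since $|F(\alpha,\beta)| \ll X^d$ on the box. The goal is to show $N(X) \gg X^2$, which in particular gives infinitely many squarefree values.

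The argument splits $k$ into three ranges. In the small range $k \le Y_1$, with $Y_1$ a large constant, I would use the Chinese remainder theorem and apply the hypothesis \eqref{greaves_congruences} prime by prime: the existence of a pair with $\ell^2 \nmid F$ in the prescribed mod-$M$ class ensures that the local density $\rho_F(\ell^2)/\ell^2$ of solutions modulo $\ell^2$ compatible with both conditions is strictly smaller than the mod-$M$ density alone. This gives $T(k,X) = c_k X^2/M^2 + O(X/k + 1)$ with $c_k$ multiplicative, and summation produces a positive main term of order $X^2$ governed by the Euler product $\prod_\ell (1 - \rho_F(\ell^2)/\ell^2)$. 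In the intermediate range $Y_1 < k \le Y_2$, where $Y_2$ is a small power of $X$, the crude estimate $T(k,X) \ll X^2/k^2$ combined with $\sum_{k>Y_1} k^{-2}$ being arbitrarily small makes the contribution $o(X^2)$.

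The main obstacle is the large range $Y_2 < k \le X^{d/2}$, which reduces to bounding, for each prime $\ell > Y_2$, the number of pairs $(\alpha,\beta) \in [-X,X]^2$ with $\ell^2 \mid F(\alpha,\beta)$, and then summing over $\ell$. Passing to primitive pairs by pulling out $\gcd(\alpha,\beta)$, and assuming $\ell$ is coprime to $\disc F$ and the leading coefficient of $F(x,1)$, the condition $\ell \mid F(\alpha,\beta)$ forces $(\alpha:\beta) \bmod \ell$ to lie among at most $d$ roots of $F(x,1) \bmod \ell$; thus the solution set in $\Z^2$ is a union of at most $d$ rank-$2$ sublattices of index $\ell$. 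Counting lattice points in $[-X,X]^2$ on each sublattice yields $O(X^2/\ell^2 + X/\ell)$ per prime, which is not enough beyond $\ell \ll X$.

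The decisive step, and the heart of Greaves's method, is a refined lattice-slicing estimate exploiting the full condition $\ell^2 \mid F(\alpha,\beta)$ rather than just $\ell \mid F$. For each sublattice modulo $\ell$ one extracts a finer rank-$2$ sub-sublattice forced by the refined congruence, and controls primitive lattice points in $[-X,X]^2$ via a successive-minima/determinant argument. The per-prime estimate produced is of the shape $O(X^2/\ell^{2} + X^{a}\ell^{b} + \ell^{c})$ with exponents whose summability over $\ell \le X^{d/2}$ holds precisely when $d \le 6$; this is where the degree hypothesis enters in an essential way. I would invoke Greaves's intricate exponent bookkeeping as a black box at this step, since a faithful reproduction is long and delicate. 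Combining the three ranges yields $N(X) = c X^2/M^2 + o(X^2)$ with $c > 0$, completing the proof. The one expected subtlety is keeping the mod-$M$ constraint visible throughout the geometry-of-numbers estimates: since $M$ is fixed, it only rescales lattices by a bounded factor and does not interact with the $\ell$-adic splitting, so the standard sieve machinery absorbs it without change.
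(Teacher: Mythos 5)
The paper does not actually prove this proposition: it is quoted as a special case of the main theorem of \cite{greaves}, and the only original content is the accompanying remark explaining that Greaves's theorem assumes nonzero coefficients of \emph{both} $x^d$ and $y^d$, whereas the proposition only assumes the $y^d$ coefficient is nonzero; the paper bridges this by replacing $F(x,y)$ with $F(x,kx+y)$ for a suitable integer $k$. Measured against that, your proposal has one decisive gap: the step you defer to ``Greaves's intricate exponent bookkeeping as a black box'' is not a technical lemma inside the proof --- it \emph{is} the theorem. The Möbius-inversion skeleton, the small/intermediate/large splitting of $k$, and the $O(X^2/\ell^2+X/\ell)$ lattice count for a single prime are all standard and were known long before \cite{greaves}; the entire content of Greaves's work is the refined estimate for $\ell^2\mid F(\alpha,\beta)$ in the range $X\ll\ell\ll X^{d/2}$ that is summable up to degree six. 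Invoking that estimate without proof makes the argument circular as a proof of the proposition, though it would be perfectly acceptable as an explanation of \emph{where} the degree-six hypothesis enters.

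Two smaller points. First, your reduction of $\ell\mid F(\alpha,\beta)$ to roots of $F(x,1)\bmod\ell$ silently assumes the coefficient of $x^d$ is nonzero; under the stated hypotheses $y$ may divide $F$, in which case $F(x,1)$ has degree less than $d$ and the component $\beta\equiv 0\pmod\ell$ (the root at infinity) is missed. This is exactly the asymmetry the paper's remark repairs with the substitution $F(x,kx+y)$, and your sketch should either perform that substitution at the outset or track the extra linear factor. Second, your treatment of the local densities for primes $\ell\mid M$ is too brisk: for such $\ell$ the congruence class $(A,B)\bmod M$ may by itself force $\ell^2\mid F(\alpha,\beta)$ unless hypothesis \eqref{greaves_congruences} is used to exhibit a lift modulo $\operatorname{lcm}(M,\ell^2)$ avoiding this; one needs to verify that the resulting local factor is strictly positive at every prime and that the Euler product converges to a positive constant, which requires separating the primes dividing $M\cdot\disc F$ from the rest.
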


\begin{rem}
The result in \cite{greaves} assumes that $F(x,y)$ has nonzero terms in both $x^d$ and $y^d$. To obtain Proposition \ref{greaves_prop}, one should apply the result of \cite{greaves} with $F(x,y)$ replaced with $F(x,kx+y)$ for an integer $k$ chosen so that the coefficient of $x^d$ is nonzero.
\end{rem}

\begin{prop}\label{unconditional_prop}
Let $r$ be an integer not divisible by $p$.
Suppose that $S(r,v)$ holds true for some $v\ge 0$, and that $f(x)$ has an irreducible factor whose discriminant is not divisible by $p$. Moreover, suppose that $\deg f\ge 3$ and that every irreducible factor of $f(x)$ has degree at most six. Then there exist infinitely many integers $d\in S_{\Q}(f)$ such that $d\equiv r\pmod p$.
\end{prop}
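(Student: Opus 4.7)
The plan is to parallel the proof of Proposition~\ref{S_reverse_direction} while replacing Granville's abc-dependent Proposition~\ref{granville_prop} with Greaves's unconditional Proposition~\ref{greaves_prop}. Since the latter result applies to squarefree values of binary forms, I would parametrize candidate twists by integer pairs $(\alpha,\beta)$, taking the rational $x$-coordinate $\alpha/\beta$; this is precisely why the conclusion weakens from $S_{\Z}(f)$ to $S_{\Q}(f)$.

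First I would construct $a$, $b$, $\Delta$, and $g(x)=f(ax+b)/\Delta\in\Z[x]$ exactly as in the proof of Proposition~\ref{S_reverse_direction}, adding the mild CRT stipulation that $f(b)\ne 0$ (achievable since $f$ has only finitely many roots). Let $G(x,y)=y^{\deg f}g(x/y)$ be the homogenization, and set
\[G_0(x,y)=\begin{cases} G(x,y) & \text{if } \deg f \text{ is even},\\ y\,G(x,y) & \text{if } \deg f \text{ is odd}. \end{cases}\]
Then $G_0$ is a binary form of degree at least $3$ with nonzero discriminant, nonzero coefficient of $y^{\deg G_0}$ (equal to $g(0)=f(b)/\Delta$), and every irreducible factor of degree at most six (the factors of $G$ correspond to those of $f$, and the extra linear factor $y$ in the odd case has degree one). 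I then choose Greaves's moduli $M=p\prod_{s\mid\delta}s$ and residues $A$, $B$ with $B\equiv 1\pmod M$ and $A\equiv n_s\pmod s$ for each prime $s\mid\delta$ (the $n_s$ are as in the earlier proof, so $g(n_s)\not\equiv 0\pmod s$). The local hypothesis of Proposition~\ref{greaves_prop} is then verified at every prime $\ell$: for $\ell\nmid M$ by CRT together with the no-fixed-prime-divisor property of $g$, and for $\ell\mid M$ by the fact that the constraints force $G_0(\alpha^*,\beta^*)\equiv g(A)\cdot(\text{unit})\pmod\ell$, which is nonzero either by the choice of $n_s$ (for $\ell=s$) or by the no-fixed-$p$-divisor property of $g$ established within the proof of Proposition~\ref{S_reverse_direction} for \emph{every} integer argument (for $\ell=p$).

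Applying Proposition~\ref{greaves_prop} yields infinitely many pairs $(\alpha,\beta)$ with $G_0(\alpha,\beta)$ squarefree. For each such pair, set $d=\delta G(\alpha,\beta)$ in the even case and $d=\delta\beta G(\alpha,\beta)$ in the odd case. The identity $F(a\alpha+b\beta,\beta)=\Delta\,G(\alpha,\beta)$ (where $F$ is the homogenization of $f$), combined with the factorization $\Delta=\delta(uqp^{v+\epsilon})^2$ from the earlier proof, exhibits an explicit rational point $(x_0,y_0)$ on $dy^2=f(x)$ with $x_0=(a\alpha+b\beta)/\beta$ and $y_0$ a rational multiple of $uqp^{v+\epsilon}$; hence $d\in S_{\Q}(f)$, and $d$ is squarefree because $G_0(\alpha,\beta)$ is squarefree and coprime to $\delta$ by the CRT constraints. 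Reducing the same identity modulo $p^{2(v+\epsilon)+1}$ (using $a\equiv 0$ and $\beta\equiv 1\pmod p$) yields $d\equiv r\pmod p$ in both parity cases, and the infinitude of distinct $d$ follows because $G_0$ has degree $\ge 3$ with nonzero discriminant, so $G_0=c$ has at most $O(c^\epsilon)$ integer solutions while Greaves provides infinitely many pairs. The main obstacle I anticipate is handling the parity of $\deg f$ uniformly: in the odd case one must observe that the square class of $f(x_0)=\Delta G(\alpha,\beta)/\beta^{\deg f}$ carries an extra factor of $\beta$, forcing the correct binary form to feed into Greaves to be $y\,G(x,y)$ rather than $G(x,y)$, so that $\beta$ itself is absorbed as a squarefree factor of $d$.
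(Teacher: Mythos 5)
Your proposal is correct and follows essentially the same route as the paper: build $g(x)=f(ax+b)/\Delta$ as in Proposition \ref{S_reverse_direction}, homogenize and multiply by $y$ when $\deg f$ is odd, verify Greaves's local conditions using the no-fixed-prime-divisor property and \eqref{coprime_values}, and send squarefree values $G_0(\alpha,\beta)$ to $d=\delta G_0(\alpha,\beta)$. The only differences are cosmetic (the paper takes $A=q$, $B=1$, $M=pD$, which slightly streamlines the local verification, and notes that $f(b)\ne 0$ is automatic since $f(b)\equiv f(m)\not\equiv 0\pmod{q^3}$), so no further changes are needed.
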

\begin{proof}
The hypotheses allow us to define a polynomial $g(x)$ as in the proof of Proposition \ref{S_reverse_direction}; we will use here the notation introduced in that proof. Let $G(x,y)$ be the homogenization of $g(x)$, $\partial=\deg g$, and $F(x,y)=y^{\sigma}G(x,y)$, where $\sigma\in\{0,1\}$ is the parity of $\partial$. We have $\disc F\ne 0$ since $\disc g(x)\ne 0$. Note that $g(0)=f(b)/\Delta$, and $f(b)\ne 0$ because $f(b)\equiv f(m)\not\equiv 0\pmod{q^3}$. It follows that the coefficient of $y^{\partial+\sigma}$ in $F(x,y)$ is nonzero.

We will apply Proposition \ref{greaves_prop} with $A=q, B=1, M=pD$. We must show that for every prime $\ell$ there exist $\alpha, \beta\in\Z$ satisfying \eqref{greaves_congruences}. By \eqref{coprime_values} we have $\gcd(g(q),pD)=1$. Thus, if $\ell|pD$, then $\ell\nmid g(q)=F(q,1)$, so we may take $\alpha=q,\beta=1$. For $\ell=q$, we have $q\nmid g(q)=F(q,1)$, as shown in the proof of Proposition \ref{S_reverse_direction}. Suppose now that $\ell\nmid pqD$, so that $\ell\nmid a$. We claim that there exists $\alpha\equiv q\pmod {pD}$ such that $\ell\nmid F(\alpha,1)$. If not, then $\ell|f(a\alpha+b)$ for every such $\alpha$. Since $a$ is invertible modulo $\ell$, this implies that $\ell$ is a fixed divisor of $f(x)$, and hence divides $D$, which is a contradiction.

Let $P$ be the set of all pairs of integers $(\alpha, \beta)$ such that $\alpha\equiv q\pmod{pD}$, $\beta\equiv 1\pmod{pD}$, and $F(\alpha,\beta)$ is squarefree. By Proposition \ref{greaves_prop}, $P$ is an infinite set. We claim that there is a well-defined map 
\[\psi:P\to\{d\in S_{\Q}(f)\;|\;d\equiv r\pmod* p\},\;(\alpha,\beta)\mapsto F(\alpha,\beta)\delta.\]
Given $(\alpha,\beta)\in P$, let $\lambda=\alpha/\beta$ and $d=F(\alpha,\beta)\delta$. Then $\beta^{\partial+\sigma} g(\lambda)=F(\alpha,\beta)$, so $\sqf(g(\lambda))=F(\alpha,\beta)$. Note that $F(\alpha,\beta)$ is relatively prime to $D$: if $\ell$ is a prime dividing $D$, then $\ell\nmid g(q)=F(q,1)$, and therefore $\ell\nmid F(\alpha,\beta)$ since $F(\alpha,\beta)\equiv F(q,1)\pmod\ell$. Thus $d$ is squarefree. Using \eqref{f_value} we obtain
\[\beta^{\partial+\sigma}f(a\lambda+b)=d(qu)^2p^{2v+2\epsilon},\] from which it follows that $\sqf(f(a\lambda+b))=d$, and therefore $d\in S_{\Q}(f)$. We claim that $d\equiv r\pmod p$. Since $\beta$ is a unit modulo $p$, $\lambda$ belongs to the local ring $\Z_{(p)}$. In this ring we have the congruence $a\lambda+b\equiv b\pmod{p^{2(v+\epsilon)+1}}$; hence, by the displayed equation above,
\[d(qu)^2p^{2v+2\epsilon}\equiv \beta^{\partial+\sigma}f(b)\pmod*{p^{2(v+\epsilon)+1}}.\]
The definition of $b$ implies that $f(b)\equiv hz_0^2p^{2(v+\epsilon)}\pmod{p^{2(v+\epsilon)+1}}$. It follows that $d(qu)^2\equiv\beta^{\partial+\sigma}hz_0^2\equiv\beta^{\partial+\sigma}rz_0^2\pmod p$. Since $qu\equiv z_0\pmod p$ and $\beta\equiv 1\pmod p$, we obtain $d\equiv r\pmod p$, as claimed. This proves that the map $\psi$ is well defined.

We end by showing that $\psi$ has finite fibers. For this purpose it suffices to show that $F$ can represent a given nonzero integer only finitely many times. If $F$ is irreducible, then, since $\deg F\ge\deg f\ge 3$, this follows from a well-known theorem of Thue. If $F$ is reducible, the proof of this finiteness statement is a straightforward exercise.
\end{proof}

\subsection*{Proof of Theorem \ref{unconditional_thm}}
Assuming that every irreducible factor of $f(x)$ has degree at most six, we must show that for every large enough prime $p$, and every integer $r$ not divisible by $p$, there exist infinitely many $d\in S_{\Q}(f)$ such that $d\equiv r\pmod p$.

By the results of \cite{krumm} mentioned in the introduction, we may assume that $\deg f\ge 3$. As seen in the proof of Theorem \ref{abc_thm}, if $p$ satisfies the conditions \eqref{p_sufficient_conditions}, then $S(r,0)$ holds for every integer $r\not\equiv 0\pmod p$. Applying Proposition \ref{unconditional_prop} we obtain the desired result.
\qed

\begin{bibdiv}
\begin{biblist}

\bib{granville}{article}{
   author={Granville, Andrew},
   title={$ABC$ allows us to count squarefrees},
   journal={Internat. Math. Res. Notices},
   date={1998},
   number={19},
   pages={991--1009},
}

\bib{greaves}{article}{
   author={Greaves, George},
   title={Power-free values of binary forms},
   journal={Quart. J. Math. Oxford Ser. (2)},
   volume={43},
   date={1992},
   number={169},
   pages={45--65},
}

\bib{hooley}{book}{
   author={Hooley, Christopher},
   title={Applications of sieve methods to the theory of numbers},
   series={Cambridge Tracts in Mathematics},
   volume={70},
   publisher={Cambridge University Press, Cambridge-New York-Melbourne},
   date={1976},
}

\bib{krumm}{article}{
   author={Krumm, David},
   title={Squarefree parts of polynomial values},
   journal={J. Th\'eor. Nombres Bordeaux},
   volume={28},
   date={2016},
   number={3},
   pages={699--724}
}

\bib{neukirch}{book}{
   author={Neukirch, J\"urgen},
   title={Algebraic number theory},
   series={Grundlehren der Mathematischen Wissenschaften [Fundamental
   Principles of Mathematical Sciences]},
   volume={322},
   publisher={Springer-Verlag, Berlin},
   date={1999}
}

\end{biblist}
\end{bibdiv}

\end{document}